\def\LC{\mathcal{L}}
\def\CC{\mathcal{C}}
\def\R{\mathbf{R}}
\def\1{\mathbf{1}}
\def\al{\alpha}
\def\pa{\partial}
\def\ep{\epsilon}
\def\de{\delta}
\def\ka{\varkappa}
\newtheorem{prop}{Proposition}[section]
\newtheorem{theorem}{Theorem}[section]
\newtheorem{remark}{Remark}
\newcommand{\si}{\sigma}
\newcommand{\om}{\omega}
\begin{document}
\title{Nonlinear diffusions and stable-like processes with coefficients depending on the median or VaR
\thanks{Supported by the AFOSR grant FA9550-09-1-0664 'Nonlinear Markov control processes and games' and by IPI of the Russian Academy of Science, grants RFBR 11-01-12026 and 12-07-00115}}
\author{Vassili N. Kolokoltsov\thanks{Department of Statistics, University of Warwick,
 Coventry CV4 7AL UK,
  Email: v.kolokoltsov@warwick.ac.uk}}
\maketitle

\begin{abstract}
In June 2012 on a conference in Bielefeld, after the author made the presentation of his theory of nonlinear Markov processes,
Tom Kurtz asked him whether his methods would allow to get well-posedness for nonlinear McKean-Vlasov type diffusions with coefficients depending on the distribution via the median. The present paper is the answer to this question. The aim here is not to find the weakest possible conditions for the affirmative answer, but a sensible general result with a quick and
transparent proof.
\end{abstract}

{\bf Key words:} McKean-Vlasov diffusion, nonlinear Markov processes, stable-like processes, tempered stable processes, median, quantile, value at risk (VaR).


\section{Main results}

By $\nabla$ we denote the gradient with respect to the spatial variable $x \in \R^d$, by $\|u\|_L$
the norm of functions in $L_1(\R^d)$, by $\de_x$ the Dirac measure at $x\in R^d$, by $|v|$ the Euclidean norm of a vector $v\in \R^d$.

For a positive integrable function $u$ on $\R$ and an $\al \in (0, \|u\|_L)$ the quantile $Q_{\al}(u)$ is defined as the number
satisfying
\[
\int_{-\infty}^{Q_{\al}(u)} u(x) \, dx =\al.
\]
If $\al=1/2$, the value  $Q_{\al}(u)$ is called the median of $u$. In finances, the values of $Q_{\al}(u)$ for small $\al$ specify the so-called Value at Risk (VaR). Other particular cases include quartiles and sextiles.

More generally, for any positive function $u$ on $\R^d$ with $\|u\|_L =1$ and any vector
 $\al=(\al_1,\cdots, \al_d)$ with all $\al_j\in (0,1)$, let us define the quantile
 vector $Q_{\al}(u)=(Q_{\al}^1,\cdots ,Q_{\al}^d)(u)$ with the coordinates uniquely specified by the equations
 \begin{equation}
\label{eqdefddimquant}
 \int_{ \{ x \in \R^d: \, x_j \le Q_{\al}^j(u)\}} u(x) \, dx =\al_j, \quad j=1, \cdots, d.
 \end{equation}

The problem addressed in this paper concerns the well-posedness of the nonlinear SDE
\begin{equation}
\label{eqSDEmedonedim}
dX_t=b(t,X_t, Q_{\al}(\LC (X_t))) \, dt + \si (t,X_t, Q_{\al}(\LC (X_t))) \, dW_t,
\end{equation}
where $W_t$ is a standard $d$-dimensional
 Brownian motion, $b,\si$ are, respectively, vector-valued and ($d\times d$ square symmetric)-matrix-valued continuous functions on $\R_+\times \R^d\times \R^d$,
and $\LC (X)$ denotes the distribution density of a random variable $X$. Such SDE arise in the analysis of interacting particle systems
related to asset pricing evaluation, see \cite{CKL}.

As an auxiliary problem we shall use the equation
\begin{equation}
\label{eqSDEmedprelim}
dX_t=b(t,X_t, \om (t)) \, dt + \si (t,X_t, \om (t)) \, dW_t,
\end{equation}
with a given continuous curve $\om (t)$ in $\R^d$. By Ito's formula, the corresponding diffusion has the generator
\begin{equation}
\label{eqSDEmedonedimprelimgen}
L_t= \frac12 \si^2 (t,x, \om (t)) \frac{\pa ^2}{\pa x^2}+b(t,x, \om (t)) \frac{\pa}{\pa x}
\end{equation}
in one-dimensional case and
 \begin{equation}
\label{eqSDEmedprelimgen}
L_t= \frac12 \sum_{i,j,k}\si_{ij}\si_{kj} (t,x, \om (t)) \frac{\pa ^2}{\pa x_i \pa x_k}+\sum_jb_j(t,x, \om (t)) \frac{\pa}{\pa x_j},
\end{equation}
or, in vector notations,
 \begin{equation}
\label{eqSDEmedprelimgen}
L_t= \frac12 (\si^2 (t,x, \om (t)) \nabla,\nabla)+(b(t,x, \om (t)), \nabla)
\end{equation}
in case of arbitrary $d$.

 Recall that a square matrix-valued function $a$ is called uniformly elliptic if
\begin{equation}
\label{eqdefunifelliptic}
m^{-1}|\xi|^2  \le (a(t,x)\xi, \xi) \le m |\xi|^2
\end{equation}
for all $\xi,x \in \R^d, t>0$ and some constant $m >0$.

It is well known (and follows from the existence of a regular Green function, see below) that, for uniformly elliptic coefficients, the distributions of solutions to equation \eqref{eqSDEmedprelim} have
 densities $u_t$ with respect to Lebesgue measure (even if the initial distribution does not have a density), and
the evolution of densities is known to satisfy the dual equation
\begin{equation}
\label{eqSDEmedprelimdual}
\frac{\pa }{\pa t}u_t(x)=L_t^*u(x)
= \frac12 \nabla ^2 [\si^2 (t,x, \om (t)) u_t(x)]
-\nabla (b(t,x, \om (t)), u_t(x)).
\end{equation}

The equation for the distribution densities of processes solving \eqref{eqSDEmedonedim} on $[0,T]$ becomes
\begin{equation}
\label{eqSDEmeddensity}
\frac{\pa }{\pa t}u_t(x)
= \frac12 \nabla ^2 [\si^2 (t,x, Q_{\al}(u_t)) u_t(x)]
-\nabla (b(t,x, Q_{\al}(u_t)), u_t(x)), \quad t\in (0,T].
\end{equation}

The well-posedness for this functional PDE is equivalent to the well-posedness of nonlinear SDE \eqref{eqSDEmedonedim}
 (In probabilistic language, solutions to \eqref{eqSDEmeddensity} supply weak solutions to SDE \eqref{eqSDEmedonedim}, but once
 the unique $Q_{\al}(u_t)$ is plugged into \eqref{eqSDEmedonedim}, its strong, path-wise, well-posedness becomes straightforward).

 It will be convenient to distinguish two cases, with $\si$ depending and not depending on $\om$, the second case requiring stronger regularity assumptions on the initial values $u_0$.

 \begin{theorem}
\label{thmediannonlin1}
Suppose $\si^2$ is uniformly elliptic, twice continuously differentiable in $x$ (with bounded derivatives)
and does not depend explicitly on $\om$,
$b$ is a bounded function continuously differentiable in $x$ (with bounded derivative), $\si,b$ are continuous functions of $t$
and $b$ is Lipschitz continuous function of $\om$, with the Lipschitz constant $\ka$.
Then, for any $T>0$ and any (strictly) positive $u_0 \in C(\R^d)\cap L_1(\R^d)$ there exists a unique classical solution $u_t \in C(\R^d)\cap L_1(\R^d)$ of equation \eqref{eqSDEmeddensity} with initial condition $u_0$, that is, it is continuous in $t\in [0,T]$ in the topology of $L_1(\R^d)$, satisfies \eqref{eqSDEmeddensity} for $t\in (0,T]$ and coincides with $u_0$ for $t=0$.
\end{theorem}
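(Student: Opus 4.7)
The plan is to recast \eqref{eqSDEmeddensity} as a fixed-point problem on the level of the quantile curve and solve it by Banach contraction. Given a continuous $\om : [0,T] \to \R^d$, one solves the linear auxiliary equation \eqref{eqSDEmedprelimdual} to produce a density $u_t^\om$ and then defines
\[
\Phi(\om)(t) := Q_\al(u_t^\om).
\]
A fixed point $\om^* = \Phi(\om^*)$ gives the desired $u_t = u_t^{\om^*}$, and conversely any classical solution of \eqref{eqSDEmeddensity} induces the fixed point $\om(t) = Q_\al(u_t)$; it will thus suffice to exhibit a unique fixed point of $\Phi$ in $C([0,T], \R^d)$.

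First I would dispatch the auxiliary linear problem. Under the stated regularity of $\si^2$ and of $b$ (with $\om$ frozen as a continuous curve), classical parabolic theory (Friedman; Ladyzhenskaya--Solonnikov--Ural'tseva) furnishes a fundamental solution $G^\om(t,x;s,y)$ enjoying two-sided Aronson bounds and the gradient estimate $\int |\nabla_y G^\om(t,x;s,y)|\,dx \le C(t-s)^{-1/2}$, with constants depending only on $T$, the ellipticity constant $m$, $\sup|b|$, and the $C^2$ bounds on $\si^2$. Convolution with $u_0$ produces a classical solution $u_t^\om \in C(\R^d) \cap L_1(\R^d)$, continuous in $t$ in $L_1$, preserving total mass (divergence form with bounded coefficients), and satisfying an $\om$-uniform pointwise lower bound $u_t^\om(x) \ge \psi(t,x) > 0$ inherited from the Aronson lower bound and strict positivity of $u_0$.

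Next I would establish the two Lipschitz estimates that drive the contraction. Each coordinate $Q_\al^j(u)$ depends only on the $j$-th marginal $u^{(j)}$, through $\int_{-\infty}^{Q_\al^j(u)} u^{(j)}(y)\,dy = \al_j$; comparing CDFs shows that if $u^{(j)} \ge \mu$ in a neighborhood of $Q_\al^j(u)$, then $|Q_\al^j(u) - Q_\al^j(\tilde u)| \le \mu^{-1}\|u-\tilde u\|_L$. Because $\si$ does not depend on $\om$, the difference $u_t^\om - u_t^{\tilde \om}$ arises purely from the drift perturbation, and Duhamel's formula with integration by parts in $y$ and the gradient estimate above will yield
\[
\|u_t^\om - u_t^{\tilde \om}\|_L \le C\ka \sqrt{t}\, \sup_{s\le t} |\om(s)-\tilde \om(s)|,
\]
using the $\om$-Lipschitz property of $b$ and the preserved unit mass of $u_s^{\tilde\om}$. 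Composing the two bounds makes $\Phi$ a contraction on $C([0,T'], \R^d)$ for $T'$ small enough; a standard concatenation extends the fixed point to all of $[0,T]$.

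The hard part will be to secure a lower bound $\mu > 0$ that is uniform in $\om$ along the iteration. The envelope $\psi$ is positive but decays at infinity, so one needs an a priori bound on the location of $Q_\al(u_t^\om)$. This should follow from the Gaussian upper bound on $G^\om$ together with the integrability of $u_0$: the marginals $(u_t^\om)^{(j)}$ carry uniformly controlled tails, so for each fixed $\al_j \in (0,1)$ the quantile $Q_\al^j(u_t^\om)$ is trapped in a bounded interval independent of $\om$. On that interval $\psi$ is bounded below, giving a uniform $\mu$ and closing the contraction.
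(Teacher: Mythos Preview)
Your proposal is correct and follows essentially the same route as the paper: recast the problem as a fixed point for the quantile curve $\om \mapsto Q_\al(u_t^\om)$, control $\|u_t^{\om}-u_t^{\tilde\om}\|_L$ by $C\ka\sqrt{t}\,\|\om-\tilde\om\|$ via Duhamel and the gradient estimate on the heat kernel (using that $\si$ is $\om$-independent so the perturbation is first order), and convert this into a Lipschitz bound on $Q_\al$ via a uniform lower bound on the density near the quantile, the latter being secured exactly as you indicate from the two-sided Aronson bounds (upper bound traps the quantile in a fixed cube, lower bound then gives the required $\mu$). The only cosmetic differences are that the paper restricts from the outset to curves with values in the trapping cube $\CC(K)$ and obtains the quantile Lipschitz bound by differentiating $h\mapsto Q_\al(u_1+h(u_2-u_1))$ rather than by comparing CDFs directly.
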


For the second result we need Sobolev spaces $H^k_1(\R^d)$ defined as the spaces of integrable functions on $\R^d$ with all derivatives up to and including order $k$ being well defined in distribution sense and being again integrable functions with the norm
$\|u\|_{H^k_1(\R^d)}$ defined as the sum of $L_1$-norms of $u$ and all its partial derivatives up to and including order $k$.

 \begin{theorem}
\label{thmediannonlin2}
Assume the same condition as above, but with $\si$ being also a Lipschitz continuous function of $\om$ (with the Lipschitz constant $\ka$). Assume also that $\si,b$ are twice continuously differentiable in $x$ (with bounded derivatives).
Then, for any $T>0$ and any (strictly) positive $u_0 \in H^2_1(\R^d)\cap C(\R^d)$, there exists a unique classical solution $u_t \in C(\R^d)\cap H^2_1(\R^d)$ of equation \eqref{eqSDEmeddensity} with initial condition $u_0$.
\end{theorem}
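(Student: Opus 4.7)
Following the strategy of Theorem \ref{thmediannonlin1}, the plan is to set up a fixed-point argument on $C([0,T],\R^d)$ for the map $\Phi$ that sends a curve $\om$ to $t\mapsto Q_\al(u^\om_t)$, where $u^\om_t$ denotes the unique solution of the linear dual equation \eqref{eqSDEmedprelimdual} with initial condition $u_0$ and $\om(t)$ plugged into $\si$ and $b$. The new difficulty compared with Theorem \ref{thmediannonlin1} is that $\si$ now depends on $\om$, so perturbing $\om$ changes the \emph{principal} part of the generator, and the resulting $L_1$-estimate on $u^{\om^{(1)}}_t - u^{\om^{(2)}}_t$ will involve second spatial derivatives of $u^\om_t$; this is precisely why the stronger assumption $u_0\in H^2_1$ is imposed.

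First I would establish the linear theory for fixed $\om$. Under the standing assumptions (uniform ellipticity, bounded second-order $x$-derivatives of $\si$ and $b$, continuity in $t$), the parametrix construction of the Green function yields a unique classical non-negative solution $u^\om_t$ of \eqref{eqSDEmedprelimdual} that conserves the $L_1$ norm. Two quantitative estimates are needed, uniform in $\om$: (a) propagation of $H^2_1$ regularity, $\|u^\om_t\|_{H^2_1}\le C(T)\|u_0\|_{H^2_1}$, obtained by differentiating the equation up to twice in $x$ and using $L_1$-contractivity of the resulting inhomogeneous evolutions; (b) a uniform positive lower bound for $u^\om_t$ on compact sets (and hence, integrating out $d-1$ variables, for the one-dimensional marginal density at each quantile coordinate), which follows from the Gaussian lower bound on the Green function together with the strict positivity of $u_0$.

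Next, for two curves $\om^{(1)},\om^{(2)}$ the difference $v_t=u^{\om^{(1)}}_t-u^{\om^{(2)}}_t$ satisfies
\[
\pa_t v_t = L^{(1)*}_t v_t + (L^{(1)*}_t - L^{(2)*}_t)\,u^{\om^{(2)}}_t,
\]
and the forcing is bounded in $L_1$ by $C\ka|\om^{(1)}(t)-\om^{(2)}(t)|\cdot \|u^{\om^{(2)}}_t\|_{H^2_1}$, the two spatial derivatives falling on $u^{\om^{(2)}}_t$ precisely because the principal part is perturbed. Duhamel combined with $L_1$-contractivity of the evolution generated by $L^{(1)*}_t$ gives
\[
\|u^{\om^{(1)}}_t - u^{\om^{(2)}}_t\|_L \le C\ka T \sup_{s\in [0,T]}|\om^{(1)}(s)-\om^{(2)}(s)|,
\]
and the elementary quantile stability bound $|Q_\al(f)-Q_\al(g)|\le c^{-1}\|f-g\|_L$, immediate from \eqref{eqdefddimquant} whenever the marginal densities are bounded below by $c$ near their quantile, then shows that $\Phi$ is Lipschitz on $C([0,T_0],\R^d)$ with constant proportional to $\ka T_0/c$. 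For $T_0$ small enough this is a contraction, so Banach's theorem produces a unique fixed point on $[0,T_0]$; since at time $T_0$ the solution still satisfies the same quantitative bounds (mass, $H^2_1$ norm, pointwise lower bound), the argument iterates to cover $[0,T]$. The main technical obstacle will be Step (a), propagating the $H^2_1$ norm uniformly in $\om$, which is what essentially forces the $C^2$-in-$x$ assumption on both $\si$ and $b$.
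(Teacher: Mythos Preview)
Your proposal is correct and essentially identical to the paper's argument: both set up a contraction on curves $\om$, use two-sided heat-kernel bounds to obtain uniform lower bounds on the marginal densities (hence Lipschitz continuity of $Q_\al$ in $L_1$), propagate the $H^2_1$ norm of $u^\om_t$ uniformly in $\om$, and compare solutions for different $\om$ via the Duhamel/propagator identity $U_1^{t,0}-U_2^{t,0}=\int_0^t U_2^{t,s}(L_s^1-L_s^2)U_1^{s,0}\,ds$, yielding a small-time contraction that is then iterated. The only cosmetic difference is that the paper explicitly restricts the fixed-point map to curves with values in a fixed cube $\CC(K)$ (using a tightness argument to show the quantiles remain there), which you should also make explicit in order to justify the uniform constant $c$ in your quantile stability step.
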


 \begin{theorem}
\label{thmediannonlin3}
Under the conditions of either of Theorems \ref{thmediannonlin1} or \ref{thmediannonlin2}, for any $\xi \in \R^d$,
there exists a unique global (i.e. defined for all positive times) strong solution of SDE \eqref{eqSDEmedonedim} with the initial condition $X_0=\xi$.
\end{theorem}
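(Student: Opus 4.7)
My plan is to decouple the nonlinear SDE through the deterministic quantile curve $\om(t):=Q_\al(\LC(X_t))$. Once such an $\om \in C([0,T],\R^d)$ is in hand, the linear SDE
\[
dX_t=b(t,X_t,\om(t))\,dt+\si(t,X_t,\om(t))\,dW_t,\quad X_0=\xi,
\]
has coefficients that are continuous in $t$ and, by the $C^1$-in-$x$ assumptions of either Theorem \ref{thmediannonlin1} or \ref{thmediannonlin2}, globally Lipschitz in $x$; the classical Ito existence/uniqueness theorem then produces a unique global strong solution (the construction on any $[0,T]$ extends to all positive times by iteration). So the real task is to produce $\om$.

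Because the initial law $\de_\xi$ is not a density, Theorems \ref{thmediannonlin1}--\ref{thmediannonlin2} do not apply directly. I would therefore mollify: choose strictly positive $u_0^\ep \in C(\R^d)\cap H^2_1(\R^d)$ converging weakly to $\de_\xi$, use the appropriate of Theorems \ref{thmediannonlin1}--\ref{thmediannonlin2} to get densities $u_t^\ep$ solving \eqref{eqSDEmeddensity}, and set $\om^\ep(t):=Q_\al(u_t^\ep)$. Aronson-type Gaussian upper and lower bounds for the parabolic Green function of \eqref{eqSDEmedprelimgen} (available thanks to uniform ellipticity of $\si^2$ and boundedness of $b$) yield equicontinuity and boundedness of $\{\om^\ep\}$ on $[0,T]$; extracting a uniformly convergent subsequence produces the desired continuous curve $\om$.

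Plugging this $\om$ into the linear SDE and solving strongly gives the candidate $X_t$. Its density $u_t$, which exists for $t>0$ by the same Aronson bounds, is identified with the weak limit of $u_t^\ep$ via stability of \eqref{eqSDEmeddensity} under initial data perturbations, so $Q_\al(u_t)=\om(t)$ and the fixed-point condition closes. For uniqueness, any other strong solution $\tilde X_t$ produces a density $\tilde u_t$ solving \eqref{eqSDEmeddensity} with Dirac initial datum; extending the uniqueness part of Theorems \ref{thmediannonlin1}--\ref{thmediannonlin2} to Dirac data by the same mollification forces $\tilde u_t=u_t$, hence the same driving $\om$, and pathwise uniqueness of the Lipschitz linear SDE finishes the argument. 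The main technical obstacle in this scheme is the passage $\ep\to 0$ for the quantile functional $u \mapsto Q_\al(u)$, which is only continuous at densities that are strictly positive on $L_1$-neighbourhoods of each level set $\{x_j=Q_\al^j(u)\}$ — precisely what Aronson's lower Gaussian bound supplies under the hypothesis that $\si^2$ is uniformly elliptic with bounded continuous coefficients.
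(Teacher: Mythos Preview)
Your route differs from the paper's. The paper does \emph{not} mollify: it runs the same contraction-mapping scheme used for Theorems~\ref{thmediannonlin1}--\ref{thmediannonlin2} directly on the Dirac initial datum, using the $\de_\xi$-version of the localization estimates (Proposition~\ref{proplocalizemed}(ii)). The heat-kernel scaling $u_t\sim t^{-d/2}$ on a ball of radius $K\sqrt t$ about $\xi$ turns the quantile Lipschitz bound into
\[
|Q^j_\al(u_t^2)-Q^j_\al(u_t^1)|\le \frac{\sqrt t}{\de K^{d-1}}\,\|u_t^1-u_t^2\|_L,
\]
and combining this with the sensitivity estimate of Proposition~\ref{propsensitivfordif1} (applied to $\de_\xi$, which has total mass~$1$) reproduces the contraction \eqref{eqcontractestimmed} for the map $\om\mapsto\hat\om$ on $C_\xi([0,T],\CC_\xi(K\sqrt T))$. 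Under the Theorem~\ref{thmediannonlin2} hypotheses the paper rescales to $\eta_t=t^{-1/2}(\om_t-\xi)$ to absorb the $1/s$ singularity of the second derivative of the heat kernel. This gives existence \emph{and} uniqueness simultaneously, for small times; global well-posedness then follows since $u_{t_0}$ is a strictly positive continuous density for any $t_0>0$ and Theorems~\ref{thmediannonlin1}--\ref{thmediannonlin2} take over.

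Your mollification/compactness scheme is a plausible route to \emph{existence}, though several steps are only asserted (uniform-in-$\ep$ equicontinuity of $\om^\ep$ at $t=0$; stability of the nonlinear solution map in both the driving curve and the initial datum simultaneously). The genuine gap is \emph{uniqueness}. Compactness yields only subsequential limits, and the sentence ``extending the uniqueness part of Theorems~\ref{thmediannonlin1}--\ref{thmediannonlin2} to Dirac data by the same mollification forces $\tilde u_t=u_t$'' is not an argument: given two putative solutions $u_t,\tilde u_t$ with $u_0=\tilde u_0=\de_\xi$, there is no canonical mollified sequence converging to both, and Arzel\`a--Ascoli cannot rule out distinct limit curves $\om\neq\tilde\om$. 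Nor can you bootstrap from Theorems~\ref{thmediannonlin1}--\ref{thmediannonlin2} applied at some $t_0>0$, since that requires $u_{t_0}=\tilde u_{t_0}$, which is exactly the issue. To close the argument you would need a direct small-time uniqueness estimate for the Dirac problem---which is precisely what the paper's contraction argument provides.
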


Our method is robust in the sense that it can be applied to other processes, not necessarily diffusions. Let us illustrate
this claim by applying it to stable-like processes. For simplicity we shall discuss the processes with a fixed stability
index $\al \in (1,2)$ and only with drift depending on a quantile.

Let us work directly with PDEs and generators. The corresponding stable SDEs can be written using the technique from \cite{BC}.
Let us consider the processes in $\R^d$ governed by the time-dependent generator
 \begin{equation}
\label{eqstablemedprelimgen}
L_t = (b(t,x, \om (t)), \nabla) - a(x)|\Delta|^{\al/2} .
\end{equation}
The evolution of densities of the corresponding process are generated by the dual operator, that is they solve the equation
\begin{equation}
\label{eqstablemedprelimdensity}
\frac{\pa }{\pa t}u_t(x)=L_t^*u(x)
= |\Delta|^{\al/2} [a(x) u(x)] -\nabla [b(t,x, \om (t)) u_t(x)].
\end{equation}

Our goal is the evolution depending on the quantile, that is, the equation

\begin{equation}
\label{eqstablemeddensity}
\frac{\pa }{\pa t}u_t(x)= |\Delta|^{\al/2} [a(x) u(x)] -\nabla [b(t,x,  Q_{\al}(u_t)) u_t(x)].
\end{equation}

 \begin{theorem}
\label{thmediannonlinstable}
Assume that $a(x)$ is a positive function, bounded from below and above and twice continuously differentiable (with bounded derivatives), and the vector-valued function
$b$ is twice continuously differentiable in $x$ (with bounded derivatives), continuous in $t$ and Lipschitz continuous in $\om$.
Then, for any $T>0$, any (strictly) positive $u_0 \in L_1(\R^d)\cap C(\R^d)$ and any $\xi \in \R^d$, there exist unique solutions $u_t \in C(\R^d)\cap L_1(\R^d)$ of equation \eqref{eqSDEmeddensity} with initial conditions $u_0$ or $\de_{\xi}$.
\end{theorem}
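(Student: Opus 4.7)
The plan is to mimic the three-step strategy that underlies Theorems \ref{thmediannonlin1}--\ref{thmediannonlin3}: first settle the auxiliary linear problem \eqref{eqstablemedprelimdensity} for an arbitrary continuous curve $\om(t)$, then recover the quantile-dependent evolution \eqref{eqstablemeddensity} by a contraction argument on the space of curves $\om:[0,T]\to\R^d$.

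First, I would establish well-posedness and regularity of the linear dual equation \eqref{eqstablemedprelimdensity}. Because $a(x)$ is bounded away from zero, $C^2$ with bounded derivatives, and $\al\in(1,2)$, the fractional operator $-a(x)|\De|^{\al/2}$ generates a Feller semigroup whose transition density (Green function) $G_{t,s}(x,y)$ admits two-sided heat-kernel bounds of stable type (of the form $c_1 t/(t^{1/\al}+|x-y|)^{d+\al}$ up to constants); these are standard for stable-like generators with smooth coefficients and can be obtained by the parametrix construction Kolokoltsov has already developed in his own work. Bounded $C^2$-regularity of the drift coefficient $b(t,x,\om(t))$ gives the same type of bounds for the perturbed generator \eqref{eqstablemedprelimgen} and supplies classical solutions $u_t$ to \eqref{eqstablemedprelimdensity} from any $u_0\in L_1\cap C$. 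Three further facts that I will need from this kernel analysis are: (i) strict positivity of $u_t$ on $\R^d$ for $t>0$ (so that $Q_\al(u_t)$ is well-defined), with pointwise lower bounds that are uniform on compact sets; (ii) $L_1$-continuity of $t\mapsto u_t$ and boundedness of $u_t$; and (iii) a linear estimate of the form
\[
\|u^{\om_1}_t-u^{\om_2}_t\|_L \le C_T \int_0^t \|\om_1(s)-\om_2(s)\|\,ds
\]
for the map from the driving curve $\om$ to the solution $u^\om$, which follows from differentiating the Duhamel formula in $\om$ and using Lipschitz continuity of $b$ in $\om$ (exactly as in the proof of Theorem \ref{thmediannonlin1}).

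Next I would show that the quantile map $u\mapsto Q_\al(u)$ is locally Lipschitz as a map from densities (with the $L_1$ norm) to $\R^d$, provided one has a uniform positive lower bound on $u$ in a neighbourhood of the quantile. Writing $F^j_u(q)=\int_{\{x_j\le q\}} u(x)dx$, the implicit function theorem gives $|Q^j_\al(u)-Q^j_\al(v)|\le \|u-v\|_L/\min_\de (F^j)'(\cdot)$, and the minimum of the derivative is exactly an integral of $u$ over a hyperplane near $Q^j_\al$, which the stable-like kernel bounds control from below. Combining this with the linear estimate above, the composition $\om\mapsto Q_\al(u^\om_\cdot)$ is Lipschitz in the sup norm on $C([0,T_0],\R^d)$ with a Lipschitz constant of the form $C T_0$, so for $T_0$ small it is a contraction. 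Its unique fixed point provides a solution to \eqref{eqstablemeddensity} on $[0,T_0]$; since the a priori bounds (mass preservation, kernel estimates, positivity) do not deteriorate with time, the argument iterates to the whole $[0,T]$, giving existence and uniqueness of $u_t\in C(\R^d)\cap L_1(\R^d)$.

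For the initial condition $\de_\xi$, I would use the smoothing property of the stable-like semigroup: for any fixed $\om$ the linear evolution applied to $\de_\xi$ produces a continuous, strictly positive density $u_\ep\in L_1\cap C$ for every $\ep>0$, with $L_1$-norm one and with the kernel estimates above. One then runs the fixed-point argument on $[\ep,T]$ starting from $u_\ep$ and passes to the limit $\ep\to 0$; uniform bounds on the Lipschitz constants in $\ep$ guarantee that the solutions glue into one solution on $(0,T]$ with $\de_\xi$ as initial trace in the weak sense. The main obstacle I anticipate is item (i): proving the quantitative pointwise lower bound on $u_t$ needed for the Lipschitz property of $Q_\al$, because the nonlocal generator precludes the direct Aronson-type arguments used for diffusions. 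This is, however, exactly where the two-sided stable-like kernel bounds do their work, and it is the only place where the structural assumptions on $a(x)$ are really used.
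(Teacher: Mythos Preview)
For the case of a strictly positive $u_0\in L_1\cap C$, your plan coincides with the paper's: the paper simply records the two-sided stable heat-kernel bound \eqref{eqstableeqtwosidedbound} and the gradient bound $|\partial_x G_\al|\le C(t-s)^{-1/\al}G_\al$ from the author's earlier work, and then says that Propositions \ref{propcorolupperbound}--\ref{proplocalizemed} and \ref{propsensitivfordif1}(i), and hence the proof of Theorem \ref{thmediannonlin1}, repeat verbatim. One small point: your sensitivity estimate (iii), as written, hides the key ingredient. The Duhamel identity produces the first-order operator $L^{\om_1}_s-L^{\om_2}_s$ acting on $u^{\om_1}_s$, and you need the gradient bound on the stable Green function to control $\|\nabla u^{\om_1}_s\|_L\le Cs^{-1/\al}\|u_0\|_L$; this gives a factor $\int_0^t s^{-1/\al}\,ds=O(t^{1-1/\al})$ rather than a plain $\int_0^t\,ds$, and that is what actually yields the small-time contraction (the precise stable analogue of the $\sqrt t$ in Proposition \ref{propsensitivfordif1}).

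For the Dirac initial data $\de_\xi$ your route differs from the paper's, and as stated it has a gap. The paper does not regularise and pass to the limit; it works directly with the Green function via the stable analogue of Proposition \ref{proplocalizemed}(ii): the density started from $\de_\xi$ at time $t$ has lower bound of order $t^{-d/\al}$ on a ball of radius $O(t^{1/\al})$ around $\xi$, and the quantile lies in that ball. These scalings combine (exactly as in \eqref{eqestimdermedGreen}) to give a quantile-Lipschitz constant that \emph{improves} like $t^{1/\al}$ as $t\downarrow 0$, so the fixed-point map on curves is a contraction for small $T$ outright. Your $\ep\downarrow 0$ scheme is problematic for two reasons: first, defining $u_\ep$ already requires a choice of $\om$ on $[0,\ep]$, which is part of the unknown; second, and more seriously, the lower bound $\de$ feeding into the quantile-Lipschitz constant (via Proposition \ref{propcorollowerbound}) depends on $\inf_{|x|\le K}u_\ep(x)$ on a \emph{fixed} compact set, and this degenerates as $u_\ep\to\de_\xi$. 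You therefore do not get Lipschitz constants uniform in $\ep$ without essentially redoing the paper's scaled analysis anyway.
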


\section{Discussion}

The regularity of initial conditions and coefficients in Theorem \ref{thmediannonlin2} can definitely be weakened by using more advanced regularizing properties of heat equations. For instance, using the smoothing property for diffusion propagators (resolving operators) in the form
\begin{equation}
\label{eqsmoothheatinSobolev}
\|U^{s,t} f\|_{H_1^2(\R^d)} \le C (t-s)^{-1/2} \|f\|_{H_1^1(\R^d)}
\end{equation}
that should hold under certain regularity assumptions for coefficients (though the author did not find a proper reference),
one can directly improve Theorem \ref{thmediannonlin2} by allowing $u_0$ to belong to $H^1_1(\R^d)$ (and not only $H^2_1(\R^d)$).

Theorem \ref{thmediannonlin3} can be extended straightforwardly to a weakly dense (in the set of probability measures) class of initial conditions
having the property that their quantile is a single point. Say, if $d=1$, on can take the initial distribution in the form
$\sum_{j=1}^n p_j \de_{\xi_j}$, as long as there exists $k$ such that
\[
\sum_{j=1}^{k-1} p_j \de_{\xi_j} <\al < \sum_{j=1}^k p_j \de_{\xi_j}.
\]
Then the quantile $Q_{\al}(u_t)$ will tend to $\xi_k$ as $t\to 0$.
An interesting question remains on what can be said about processes with initial
distributions with the median not uniquely defined, as say $(\de_a+\de_b)/2$ with any $a\neq b$.

Theorem \ref{thmediannonlinstable} can be extended in several directions. It has a straightforward extension to processes with
variable stability index $\al (x)$ with corresponding required properties of transition probabilities following by (straightforward time-dependent extension of) Theorem 5.1 of \cite{Ko00} (see also Theorem 7.5.1 from  \cite{KoMarbook}), as well as stable processes
with variable spectral measure and/or perturbed by compound Poisson processes (say, tempered stable processes)
with required properties of transition probabilities following by Theorem 4.1 of \cite{Ko00}.
By developing appropriate smoothing results for time dependent stable-like processes, various 'stable' analogs
of Theorem \ref{thmediannonlin2} can be obtained.

Our arguments are inspired by the approach developed systematically in \cite{KoNonlbook}, though technical details are problem-specific. The main impetus for this development was an attempt to get a general understanding of the appearance of nonlinear measure-valued evolutions as limits of Markov models of interacting particles (extending the well developed theory
arising from the standard McKean-Vlasov diffusions). Recently, there appeared many publications combining mean-field interacting particle systems with control, see \cite{AnDj}, \cite{GaGaBou}, \cite{GMS2010}, \cite{HCM3}, \cite{KoLiYa}, \cite{Ko12}
for various directions. From the results of the present paper one can expect an interesting
development of mean-field control theory with peculiar (but from practical point of view quite natural) dependence of control
on empirical measures via such characteristics as median or VaR.

The rest of the paper is devoted to the proof of Theorems 1.1-1.3.

\section{Two-sided estimates for heat kernels and their consequences}

Let us denote by $G_{\si}$ the heat kernel of the standard heat equation in $\R^d$ with a diffusion coefficient $\si>0$, that is
\begin{equation}
\label{eqfreeheatkern}
G_{\si}(t,x)=(2\pi t \si^2)^{-d/2} \exp \{ -\frac{x^2}{2t \si^2} \}.
\end{equation}
Recall that $\int G_{\si}(t,x) \, dx=1$ for all $\si,t>0$.

For the general heat equation
\begin{equation}
\label{eqdiffeq}
\frac{\pa u}{\pa t}=L_t u, \quad L_tu (x)=\frac12(a(t,x)\nabla, \nabla)u(x)+(b(t,x), \nabla u(x))+c(t,x)u(x).
\end{equation}
the following fact is well known (see e.g. \cite{PorEid84}, \cite{LiSe00} and references therein).

\begin{prop}
\label{proptwosidedheat}
(i) Suppose $a$ is uniformly elliptic, that is \eqref{eqdefunifelliptic} holds, and continuously differentiable in $x$ and $a,b,c$ are bounded and continuous with respect to all their variables, so that
\[
\sup_{t,x} \max(|\nabla a(t,x)|, |b(t,x)|, |c(t,x)|)\le M,
\]
then there exist constants $\si_i, C_i$, $i=1,2$, depending only on $m,M, T$ such that
the Green function (or heat kernel) of equation \eqref{eqdiffeq} (i.e. its solution $G(t,x,s,\xi)$ with the initial condition $\de_{\xi}$ at time $s$) is well defined and satisfies the following two sided bounds
\begin{equation}
\label{eqdiffeqtwosidedbound}
C_1 G_{\si_1}(t-s, x-\xi)
\le G(t,x, s, \xi) \le C_2 G_{\si_2}(t-s, x-\xi), \quad 0<s,t<T.
\end{equation}

(ii) Moreover, if $a$ is twice continuously differentiable in $x$ and $b,c$ are continuously differentiable (with al derivatives bounded),
then $G(t,x, s, \xi)$ is differentiable in $x$ and
\begin{equation}
\label{eqdiffeqheatkernelderiv}
\left|\frac{\pa}{\pa x} G(t,x, s, \xi)\right| \le Ct^{-1/2} G(t,x, s, \xi), \quad 0<s,t<T,
\end{equation}
where the constant $C$ depends only on the bounds for the derivatives and $m,M,T$.
\end{prop}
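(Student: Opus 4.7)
The classical proof, going back to Aronson and Nash for the bounds and to Moser for the lower bound via the parabolic Harnack inequality, proceeds in three stages. First, I would establish existence of $G$ together with the upper bound in (i) by Levi's parametrix method. Freeze the coefficients at the source point $(s,\xi)$ to obtain the Gaussian seed $Z(t,x,s,\xi)=G_{\sqrt{a(s,\xi)}}(t-s,x-\xi)$, and look for the Green function in the form $G=Z+Z\ast\Phi$, where $\ast$ denotes the space-time convolution. Applying $\pa_t-L_t$ to $Z$ yields a Volterra integral equation for the kernel $\Phi$ whose leading singularity in $t-s$ is integrable thanks to $a\in C^1$; the resulting Neumann series converges and inherits a sub-Gaussian bound. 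Convolving back with $Z$ gives the upper bound $C_2 G_{\si_2}$ in \eqref{eqdiffeqtwosidedbound}.

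Second, for the lower bound I would invoke the parabolic Harnack inequality of Moser, which under pure uniform ellipticity and boundedness of $b,c$ gives a local comparison of nonnegative solutions on neighbouring parabolic cylinders. From the upper bound and the conservation of mass, one extracts an on-diagonal lower bound of the form $G(t,\xi,s,\xi)\ge c(t-s)^{-d/2}$, and then chains the Harnack estimate along a tube of roughly $|x-\xi|^2/(t-s)$ cylinders connecting $(s,\xi)$ to $(t,x)$. Each step loses at most a bounded multiplicative factor, so the total loss is exponential in the number of steps, which produces precisely the Gaussian lower bound $C_1 G_{\si_1}$ with some $\si_1\le\si_2$.

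Third, for (ii) I would differentiate the parametrix representation term by term in $x$. The spatial derivative of the Gaussian seed $Z$ equals $Z$ multiplied by $-(x-\xi)/((t-s)\,a(s,\xi))$, which is dominated by $C(t-s)^{-1/2}$ times a Gaussian of slightly larger variance. Under the extra hypotheses $a\in C^2$ and $b,c\in C^1$ with bounded derivatives, the Volterra kernel and its spatial derivatives satisfy analogous bounds, so termwise differentiation of the Neumann series is justified; combining with the already established upper bound on $G$ yields the estimate \eqref{eqdiffeqheatkernelderiv}.

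The main obstacle is the Gaussian lower bound. The parametrix construction is essentially bookkeeping once the Gaussian convolution calculus is in place, and the derivative estimate differs from the basic parametrix bound only by an explicit $(t-s)^{-1/2}$ factor coming from differentiating the Gaussian. The Harnack chaining, by contrast, requires the full Moser machinery: iterated weighted Caccioppoli estimates controlling local $L^\infty$ norms, combined with an oscillation argument in the spirit of John and Nirenberg. An alternative route is a probabilistic lower bound that removes the drift by a Girsanov transform and applies a Brownian bridge estimate to the resulting martingale problem; this avoids Moser but shifts the work into controlling the Girsanov exponential, which is tractable under the present boundedness assumptions on $b$ and $c$.
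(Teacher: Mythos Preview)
Your sketch is the standard Aronson--Nash--Moser route and is correct in outline. However, the paper does not actually prove this proposition: it states the result as ``well known'' and simply cites \cite{PorEid84} and \cite{LiSe00} for the two-sided bounds, giving no argument of its own. So there is nothing to compare your proof against; you have supplied a genuine proof where the paper only supplies references.

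One point worth noting from the paper's perspective: the Remark immediately following the proposition observes that for the subsequent applications (Propositions \ref{propcorolupperbound}--\ref{proplocalizemed}) the full Gaussian lower bound is not needed, only a local near-diagonal lower bound of the form $G(t,x,s,\xi)\ge C_1 G_{\si_1}(t-s,x-\xi)$ for $|x-\xi|\le c\sqrt{t-s}$. That weaker statement follows directly from the parametrix expansion or from small-time asymptotics of the heat kernel, without any Harnack chaining. So the part of your sketch you identify as the main obstacle --- the Moser machinery or the Girsanov alternative --- is in fact dispensable for everything the paper actually uses. If you were writing this up for the paper's purposes, you could stop after the parametrix construction and the on-diagonal lower bound, and avoid the chaining argument entirely.
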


\begin{remark} The known proofs of two-sided estimates (especially the lower one) are rather technical. Thus it is useful to
note (especially for possible generalizations) that we need only a weaker local lower bound (which follows from small time and near diagonal asymptotics of the Green function, see e.g. \cite{Ko00book}). Namely, for Proposition \ref{proplocalizemed} (i) we need the l.h.s. inequality \eqref{eqdiffeqtwosidedbound} for
$|x-\xi| \le c\sqrt {t-s}$ for at least one $c>0$, and for Proposition \ref{proplocalizemed} (ii) we need it for
$|x-\xi| \le c\sqrt {t-s}$ for sufficiently large $c>0$.
\end{remark}

Notice that the Green function for equation \eqref{eqSDEmedprelimdual} with $L_t^*$ is obtained by changing places of $x,\xi$ and $t,s$ in the Green function of the corresponding equation with $L_t$, which is of form \eqref{eqdiffeq}.

Let us deduce two corollaries, one from the upper bound and another from the lower bound of inequality \eqref{eqdiffeqtwosidedbound}.
Let us denote by $U_{m,M,t}(u_0)$ the set of solutions $u_t(x)$, $t>0$, of the Cauchy problems of all equations \eqref{eqdiffeq} satisfying the conditions of Proposition \ref{proptwosidedheat} with given $m,M$ and a given initial condition $u_0$. Also set
\[
U_{m,M,(0,T]}(u_0)=\cup_{t\in (0,T]}U_{m,M,t}(u_0).
\]

\begin{prop}
\label{propcorolupperbound}
For any $u_0 \in L_1(\R^d)$ and $\ep>0$ there exists $K>0$ such that
\begin{equation}
\label{eq1propcorolupperbound}
\int_{|x|\ge K} |u(x) | \, dx  \le \ep
\end{equation}
for all $u\in U_{m,M,[0,T]}(u_0)$ (that is, the set of measures $\{u (x) dx\}$, $u\in U_{m,M,[0,T]}(u_0)$ is tight).
\end{prop}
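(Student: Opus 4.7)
The plan is to represent each $u_t \in U_{m,M,(0,T]}(u_0)$ through the associated parabolic Green function and reduce the tightness to a uniform Gaussian tail estimate, using the upper bound from Proposition \ref{proptwosidedheat}.

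First, I write $u_t(x) = \int G(t,x,0,\xi)\, u_0(\xi)\, d\xi$, where $G$ is the Green function of the underlying equation (or of the dual, via the swap of $(x,\xi)$ and $(t,s)$ noted before the statement). Invoking $G(t,x,0,\xi) \le C_2\, G_{\si_2}(t,x-\xi)$ and Fubini yields
\[
\int_{|x|\ge K}|u_t(x)|\, dx \le C_2 \int |u_0(\xi)| \left(\int_{|x|\ge K} G_{\si_2}(t,x-\xi)\, dx\right) d\xi,
\]
with $C_2,\si_2$ depending only on $m,M,T$, hence uniform over the entire family.

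Next, I split the $\xi$-integration at a radius $R$, chosen (using $u_0\in L_1(\R^d)$) so that $C_2\int_{|\xi|>R}|u_0(\xi)|\, d\xi<\ep/2$; for these ``far'' $\xi$ I use the crude bound that the inner $x$-integral is at most $1$. For $|\xi|\le R$ and $K>R$, the inclusion $\{|x|\ge K\}\subset\{|x-\xi|\ge K-R\}$ together with a translation in $x$ gives
\[
\int_{|x|\ge K} G_{\si_2}(t,x-\xi)\, dx \le \int_{|y|\ge K-R} G_{\si_2}(t,y)\, dy = \P\{|Z|\ge (K-R)/(\si_2\sqrt{t})\},
\]
where $Z$ is a standard normal vector in $\R^d$. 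This quantity is \emph{increasing} in $t\in(0,T]$ — smaller $t$ concentrates the Gaussian more tightly and hence shrinks the tail — so it is bounded by its value at $t=T$, which tends to $0$ as $K\to\infty$. Choosing $K$ large enough to make the value at $t=T$ smaller than $\ep/(2C_2\|u_0\|_L)$ yields the required $\ep$-bound. The case $t=0$ (if included in the statement) is immediate from $u_0\in L_1(\R^d)$.

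The only real subtlety is uniformity in $t$ as $t\to 0^+$, which is handled by the noted monotonicity of Gaussian tails in $t$. Uniformity across the equation family is automatic from the dependence of $C_2$ and $\si_2$ only on $m,M,T$. I do not anticipate any serious obstacle beyond unpacking the upper Gaussian bound.
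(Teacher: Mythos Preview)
Your proof is correct and follows essentially the same approach as the paper: representing $u_t$ via the Green function, applying the upper Gaussian bound, splitting the $\xi$-integral into a near part (controlled by a Gaussian tail uniform in $t\in(0,T]$) and a far part (controlled by the integrability of $u_0$). The only cosmetic differences are that the paper labels the two radii $K$ and $\tilde K$ (with final cutoff $K+\tilde K$) and handles the $t$-uniformity by the change of variables $z=y/\sqrt t$ rather than invoking monotonicity of the Gaussian tail, which amounts to the same thing.
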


\begin{proof}
Let $K$ be chosen in such a way that
\[
\int_{|x|\ge K} |u_0(x) | \, dx  \le \ep,
\]
and let $\tilde K$ be another positive constant.

Recall that the solution to the Cauchy problem for equation \eqref{eqdiffeq} can be written in terms of its Green function as
\[
u_t(x)=\int  G(t,x, 0, \xi) u_0(\xi) \, d\xi.
\]
From the upper bound in \eqref{eqdiffeqtwosidedbound} we get
\[
\int_{|x| > K+\tilde K} u_t(x) \, dx \le C_2 \int_{|x| > K+\tilde K} \int_{\R^d} G_{\si_2}(t, x-\xi) u_0 (\xi) \, d\xi dx
\]
\[
\le  C_2 \int_{|\xi| > K, y\in \R^d} G_{\si_2}(t, y) u_0 (\xi) \, d\xi dy
+  C_2 \int_{|y| > \tilde K, |\xi| \le K} G_{\si_2}(t, y) u_0 (\xi) \, d\xi dy,
\]
which (by changing the variable of integration in the second integral to $z=y/\sqrt t$) is estimated by
\[
C_2 \ep +\|u_0\|_L \int_{|z| > \tilde K/\sqrt T} (2\pi \si_2^2)^{-d/2} \exp \{ -\frac{x^2}{2 \si_2^2} \} \, dz.
\]
Clearly the r.h.s. can be made arbitrary small by choosing $\ep>0$ small enough and $\tilde K$ large enough.
\end{proof}

\begin{prop}
\label{propcorollowerbound}
For any strictly positive continuous function $u_0$ on $\R^d$ and any $K>0$ there exists $\de>0$ such that
\begin{equation}
\label{eq1propcorollowerbound}
u(x) \ge \de
\end{equation}
for all $u\in U_{m,M,[0,T]}(u_0)$ whenever $|x| \le K$. In other words
\[
\inf \{ u(x): |x| \le K, u\in U_{m,M,[0,T]}(u_0) \} >0.
\]
\end{prop}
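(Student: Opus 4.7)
The plan is to reproduce for the lower bound the same template used in Proposition~\ref{propcorolupperbound}: write $u_t(x)$ as the convolution of $u_0$ against the Green function, apply the two-sided estimate \eqref{eqdiffeqtwosidedbound} (or rather its local near-diagonal version noted in the Remark), and then perform a scaling in the Gaussian integral to get a constant that is independent of both $t \in (0,T]$ and the particular operator $L_t$ in the class.

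Concretely, fix $K > 0$. The key parameter to choose is a constant $c > 0$ such that the lower bound
\[
G(t, x, 0, \xi) \ge C_1 \, G_{\si_1}(t, x-\xi)
\]
holds on the near-diagonal region $|x - \xi| \le c \sqrt{t}$, with $\si_1$ and $C_1$ depending only on $m, M, T$. Set
\[
R = K + c \sqrt{T}, \qquad \mu = \inf_{|\xi| \le R} u_0(\xi),
\]
which is strictly positive because $u_0$ is continuous and strictly positive on the closed ball of radius $R$. Then for any $|x| \le K$, any $t \in (0, T]$, and any $u \in U_{m,M,t}(u_0)$, the representation $u_t(x) = \int G(t,x,0,\xi) u_0(\xi) \, d\xi$ yields
\[
u_t(x) \ge C_1 \int_{|x - \xi| \le c\sqrt{t}} G_{\si_1}(t, x-\xi) \, u_0(\xi) \, d\xi \ge C_1 \mu \int_{|y| \le c\sqrt{t}} G_{\si_1}(t, y) \, dy,
\]
since $|x| \le K$ and $|x - \xi| \le c\sqrt{t} \le c\sqrt{T}$ force $|\xi| \le R$.

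Finally I would change variables $z = y/\sqrt{t}$ in the last integral to get
\[
u_t(x) \ge C_1 \mu \int_{|z| \le c} (2\pi \si_1^2)^{-d/2} \exp\!\left\{ -\frac{z^2}{2 \si_1^2} \right\} dz =: \de > 0,
\]
and the right hand side is a positive constant independent of $t \in (0,T]$ and of the particular $L_t$. Taking the infimum over $u \in U_{m,M,[0,T]}(u_0)$ and $|x| \le K$ then gives the claim; for completeness one can include $t = 0$ by noting that $u_0 \ge \inf_{|x| \le K} u_0 > 0$ there.

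The only genuine subtlety is choosing $c$ so that the lower bound in \eqref{eqdiffeqtwosidedbound} actually applies on $|x - \xi| \le c\sqrt{t}$; in the full two-sided statement of Proposition~\ref{proptwosidedheat} any $c$ works, but as indicated in the Remark, if one only has the weaker near-diagonal lower bound, one must pick $c$ large enough that the restricted Gaussian integral $\int_{|z|\le c}(2\pi\si_1^2)^{-d/2}e^{-z^2/(2\si_1^2)}\,dz$ is still bounded away from zero—which is trivially the case for any fixed $c > 0$. No other step involves anything beyond the representation formula, continuity of $u_0$, and a scaling of the heat kernel.
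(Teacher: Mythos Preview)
Your proof is correct and follows essentially the same approach as the paper's: restrict the representation integral to the near-diagonal region $|x-\xi|\le c\sqrt t$, use the Gaussian lower bound there, pull out the infimum of $u_0$ on the enlarged ball $|\xi|\le K+c\sqrt T$, and observe by scaling that the remaining Gaussian integral is a positive constant independent of $t$. The paper simply takes $c=1$ and, instead of your substitution $z=y/\sqrt t$, bounds the exponential below on the ball and multiplies by its volume, which is the same computation.
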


\begin{proof}
From positivity and continuity of $u$ we can find, for any $K>0$, a number $\de >0$ such that
$u_0(x) \ge \de$ whenever $|x| \le K+\sqrt T$.

Next, from the lower bound in \eqref{eqdiffeqtwosidedbound} (even its local version) we get
\[
u_t(x) \ge C_1 \int_{|x-\xi| \le \sqrt t} G_{\si_1}(t, x-\xi) u_0 (\xi) \, d\xi.
\]
If $|x| \le K$ we have $|\xi| \le K+\sqrt T$ in the integral. Consequently, for  $|x| \le K$,
\[
u_t(x) \ge \de C_1 \int_{|y| \le \sqrt t} G_{\si_1}(t, y) \, dy
\]
\[
\ge  \de C_1 (2\pi t \si^2)^{-d/2} \exp \{ -\frac{1}{2 \si_1^2} \} \int_{|y| \le \sqrt t} \, dy
=\de C_1 (2\pi \si^2)^{-d/2} \exp \{ -\frac{1}{2 \si_1^2} \}V_d,
\]
where $V_d$ is the volume of the unit ball in $R^d$.
This lower bound proves the proposition.
\end{proof}

Combining these two facts we arrive at the following main result of this section.

\begin{prop}
\label{proplocalizemed}
 Consider equation \eqref{eqdiffeq} under the assumptions
 of Proposition \ref{proptwosidedheat} (i) and the additional condition that the solution preserves the $L_1$-norm
 (like equation \eqref{eqSDEmedprelimdual} that we are mostly interested in).
 Let also arbitrary $\al=(\al_1,\cdots, \al_d)$ with $\al_j\in (0,1)$
 and $T>0$ be given.

 (i) For any $u_0 \in C(\R^d)\cup L_1(\R^d)$ with $\|u_0\|_L=1$, there exist $K,\de, \ep>0$
 such that, for any $u\in U_{m,M,(0,T]}(u_0)$,
 \begin{equation}
\label{eq1proplocalizemed}
 \inf \{u(x): \max_j |x_j| \le 2K \} \ge \de,
 \end{equation}
\begin{equation}
\label{eq2proplocalizemed}
 \min_j |Q^j_{\al} (u)| \le K
 \end{equation}
 and
 \begin{equation}
\label{eq3proplocalizemed}
\int_{\{x\in \R^d: \, \min_j|x_j|\ge K\}} |u(x) | \, dx  \le \ep.
\end{equation}
The set of functions satisfying conditions \eqref{eq1proplocalizemed} - \eqref{eq3proplocalizemed}
is convex.

(ii) There exist constants $K, \de >0$ such that, for any $\xi=(\xi_1, \cdots , \xi_d) \in \R^d$ and
$u\in U_{m,M,t}(\de_{\xi})$, $t\in (0, T]$,
  \begin{equation}
\label{eq4proplocalizemed}
 \inf \{u(x): \max_j |x_j-\xi_j| \le 2K \} \ge \de t^{-d/2},
 \end{equation}
\begin{equation}
\label{eq5proplocalizemed}
 \min_j |Q^j_{\al} (\de_{\xi})-\xi_j| \le K \sqrt t
 \end{equation}
 and
 \begin{equation}
\label{eq6proplocalizemed}
\int_{\{x\in \R^d: \, \min_j|x_j-\xi_j|\ge K \sqrt t\}} |u(x) | \, dx  \le \ep.
\end{equation}
  \end{prop}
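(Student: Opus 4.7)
The plan is to deduce both parts as corollaries of the upper and lower heat-kernel bounds already packaged in Propositions \ref{propcorolupperbound} and \ref{propcorollowerbound}. The underlying picture is that the upper bound controls the tails, which in turn pins down the quantile, while the lower bound fills in the central region.

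For Part (i), I would first fix $\ep>0$ small enough that $\ep<\min_j\min(\al_j,1-\al_j)$ and then apply Proposition \ref{propcorolupperbound} to produce $K>0$ with $\int_{|x|\ge K}u(x)\,dx\le\ep$ for every $u\in U_{m,M,(0,T]}(u_0)$. Since $\{x:\min_j|x_j|\ge K\}\subset\{x:|x|\ge K\}$, this gives \eqref{eq3proplocalizemed} at once. The same tail estimate, applied in each coordinate slab $\{x_j\ge K\}$ and $\{x_j\le -K\}$ together with $\|u\|_L=1$, forces $|Q^j_\al(u)|\le K$ for every $j$ (a stronger version of \eqref{eq2proplocalizemed}), since a quantile at level $\al_j$ cannot lie in a half-space whose complement carries mass less than $1-\al_j$. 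Finally I would invoke Proposition \ref{propcorollowerbound} at radius $2K\sqrt d$, which contains the cube $\{\max_j|x_j|\le 2K\}$, to produce $\de>0$ with $u(x)\ge\de$ throughout that cube, yielding \eqref{eq1proplocalizemed}. Convexity of the set specified by \eqref{eq1proplocalizemed}--\eqref{eq3proplocalizemed} is clear for the pointwise inequality \eqref{eq1proplocalizemed} and the integral inequality \eqref{eq3proplocalizemed}; once $\|u\|_L=1$ is taken into account, \eqref{eq2proplocalizemed} is itself a consequence of \eqref{eq3proplocalizemed} by the argument above, so it too is preserved under convex combinations.

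For Part (ii), $u$ is the Green function itself, $u(x)=G(t,x,0,\xi)$, so the two-sided bound \eqref{eqdiffeqtwosidedbound} is already in the required Gaussian form; I read the box in \eqref{eq4proplocalizemed} as $\max_j|x_j-\xi_j|\le 2K\sqrt t$, consistent with the $t^{-d/2}$ factor on the right and with the $\sqrt t$-scaling in \eqref{eq5proplocalizemed}--\eqref{eq6proplocalizemed}. On that box one has $|x-\xi|\le 2K\sqrt{dt}$, so the local version of the lower bound noted in the Remark (with $c=2K\sqrt d$) gives
\[
u(x)\ge C_1(2\pi t\si_1^2)^{-d/2}\exp\left(-\frac{2K^2 d}{\si_1^2}\right)=\de\, t^{-d/2},
\]
which is \eqref{eq4proplocalizemed}. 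For \eqref{eq6proplocalizemed} I would apply the upper Gaussian bound $u(x)\le C_2 G_{\si_2}(t,x-\xi)$ and change variable $y=(x-\xi)/\sqrt t$, converting the integral over $\{\min_j|x_j-\xi_j|\ge K\sqrt t\}$ into a $t$-independent Gaussian tail that can be made $\le\ep$ by enlarging $K$. The quantile estimate \eqref{eq5proplocalizemed} then follows by repeating the Part (i) argument in the shifted coordinates $x-\xi$, using these same rescaled tail bounds.

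The only real obstacle is bookkeeping: I must choose $\ep$ before $K$ in each part and then enlarge $K$ so that one and the same value works in all three conditions simultaneously, and I must check that the \emph{local} lower Gaussian bound (rather than the global one, which is much harder to establish) already suffices on the enlarged box in Part (ii). Beyond that, no estimate besides the two preceding propositions and the Remark is required.
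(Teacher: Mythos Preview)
Your argument is correct and follows essentially the same route as the paper: choose $\ep$ first, obtain $K$ from the upper-bound tightness (Proposition \ref{propcorolupperbound}) to control the tails and hence the quantiles, then obtain $\de$ from the lower bound (Proposition \ref{propcorollowerbound}); for part (ii) you correctly recognize $u$ as the heat kernel and apply the Gaussian two-sided bound directly after the rescaling $y=(x-\xi)/\sqrt t$. Your reading of the box in \eqref{eq4proplocalizemed} as $\max_j|x_j-\xi_j|\le 2K\sqrt t$ is in fact the only consistent interpretation (and is what is actually used later in the proof of Theorem \ref{thmediannonlin3}), so this is a helpful clarification rather than a deviation.
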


  \begin{proof}
(i)  Recall that $Q_{\al}$ are defined by \eqref{eqdefddimquant}.
Let us pick up an $\ep$ such that
\[
\ep < \min (\al_1, \cdots, \al_d, 1-\al_1, \cdots, 1-\al_d).
\]
Applying Proposition  \ref{propcorolupperbound} we can find $K$ such that
\eqref{eq1propcorolupperbound} or \eqref{eq3proplocalizemed} holds for all $u\in  U_{m,M,[0,T]}(u_0)$ implying
 \eqref{eq2proplocalizemed}. Applying Proposition \ref{propcorollowerbound} yields
 \eqref{eq1proplocalizemed} for some $\de >0$. The last statement is obvious.

(ii) Notice that $u\in U_{m,M,t}(\de_{\xi})$ is in fact the heat kernel
\[
u=G(t,x,0,\xi)
\]
for the diffusion equation of a considered class.
Using the upper bound in \eqref{eqdiffeqtwosidedbound}, the free heat kernel \eqref{eqfreeheatkern}
and a well known (and easy to prove) estimate
\[
(2\pi t \si^2)^{-1/2} \int_K^{\infty} \exp \{ -\frac{x^2}{2t \si^2} \} dx \le C \exp \{ -\frac{K^2}{2t \si^2} \}
\]
for $t\in (0,T]$ with some constant $C$ depending on $T$ and $\si$ only, we obtain
\eqref{eq6proplocalizemed} for large enough $K$. This implies \eqref{eq5proplocalizemed}.
The lower bound in \eqref{eqdiffeqtwosidedbound} implies \eqref{eq4proplocalizemed}.
\end{proof}

\section{Sensitivity analysis for diffusion equations}

Here we shall exploit the smoothness of the heat kernels of diffusion equations to prove
 Lipschitz continuity, in $L_1$-norm, of the solutions to diffusion equations with respect to
functional parameters. The results may be known, but the author did not find an appropriate reference.

\begin{prop}
\label{propsensitivfordif1}
 Consider two equations \eqref{eqdiffeq}, specified by two families of operators $L_t^1, L_t^2$ with the coefficients
  $a_1,b_1,c_1$ and $a_2,b_2,c_2$ respectively, each satisfying
 the assumptions of Proposition \ref{proptwosidedheat}.

(i) Assume that the second order part of $L_t^1, L_t^2$ coincide, that is $a_1=a_2$.
  Then for any $u_0 \in L_1(\R^d)$, the solutions
 $u_t^1, u_t^2$ of the corresponding Cauchy problems satisfy the estimate
 \begin{equation}
\label{eq1propsensitivfordif1}
 \|u_t^1-u_t^2\|_L \le C \sqrt t \sup_{t,x} (|b_1(t,x)-b_2(t,x)|+|c_1(t,x)-c_2(t,x)|)\|u_0\|_L.
 \end{equation}

 (ii) Not assuming that $a_1=a_2$, but instead requiring that all coefficients $a_i,b_i,c_i$ be twice continuously differentiable
 in $x$ (with bounded derivatives), it follows that for any $u_0 \in H^2_1(\R^d)$
 the solutions
 $u_t^1, u_t^2$ of the corresponding Cauchy problems satisfy the estimate
 \begin{equation}
\label{eq1propsensitivfordif1}
 \|u_t^1-u_t^2\|_L \le C \sqrt t \sup_{t,x}
 (|a_1(t,x)-a_2(t,x)|+|b_1(t,x)-b_2(t,x)|+|c_1(t,x)-c_2(t,x)|)\|u_0\|_{H^2_1(\R^d)}.
 \end{equation}
  \end{prop}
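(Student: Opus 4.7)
The natural tool is a Duhamel (variation-of-parameters) identity combined with the Green-function bounds of Proposition \ref{proptwosidedheat}. Let $U_i^{s,t}$ denote the propagator generated by $L_t^i$, so $u_t^i = U_i^{0,t} u_0$, with kernel $G_i(t,x,s,\xi)$. Differentiating $v(s) = U_1^{s,t} u_s^2$ on $[0,t]$ and using $\pa_s U_1^{s,t} = -U_1^{s,t} L_s^1$ together with $\pa_s u_s^2 = L_s^2 u_s^2$ gives
\[
u_t^2 - u_t^1 = \int_0^t U_1^{s,t} (L_s^2 - L_s^1) u_s^2 \, ds.
\]
The upper bound in \eqref{eqdiffeqtwosidedbound}, integrated in $x$, implies $\int G_1(t,x,s,\xi)\,dx \le C_2$, so $U_1^{s,t}$ is $L_1$-bounded uniformly in $0 \le s \le t \le T$, whence
\[
\|u_t^1 - u_t^2\|_L \le C_2 \int_0^t \|(L_s^2 - L_s^1) u_s^2\|_L \, ds.
\]
It remains to estimate the integrand pointwise in $s$.

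For part (i), the equality $a_1 = a_2$ reduces the integrand to $(b_2-b_1, \nabla u_s^2) + (c_2-c_1) u_s^2$, so only $\|\nabla u_s^2\|_L$ and $\|u_s^2\|_L$ need to be controlled. The gradient bound \eqref{eqdiffeqheatkernelderiv} applied pointwise to $u_s^2 = U_2^{0,s} u_0$ and then integrated gives $\|\nabla u_s^2\|_L \le C s^{-1/2}\|u_0\|_L$, and $L_1$-boundedness of the propagator gives $\|u_s^2\|_L \le C_2\|u_0\|_L$. The time integrals $\int_0^t s^{-1/2}\, ds = 2\sqrt t$ and $\int_0^t ds = t \le \sqrt T\cdot \sqrt t$ deliver the $\sqrt t$ prefactor of \eqref{eq1propsensitivfordif1}.

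For part (ii) the additional contribution $\tfrac12((a_2-a_1)\nabla, \nabla) u_s^2$ requires control of $\|\nabla^2 u_s^2\|_L$, so the plan is to establish the time-uniform bound $\|u_s^2\|_{H^2_1(\R^d)} \le C \|u_0\|_{H^2_1(\R^d)}$ on $[0,T]$. Applying $\nabla$ and $\nabla^2$ to $\pa_s u_s^2 = L_s^2 u_s^2$ produces inhomogeneous equations with the same generator $L_s^2$ and source terms given by the commutators $[\nabla, L_s^2]$ and $[\nabla^2, L_s^2]$, whose coefficients involve the bounded first and second derivatives of $a_2,b_2,c_2$ assumed in (ii). A Duhamel representation for $\nabla u_s^2$ and $\nabla^2 u_s^2$ combined with a coupled Gronwall iteration should then propagate the $H^2_1$ norm of $u_0$ forward in time with a $T$-dependent constant. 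Given this, $\|((a_2-a_1)\nabla, \nabla) u_s^2\|_L \le \|a_2-a_1\|_\infty \cdot C\|u_0\|_{H^2_1}$, and integration in $s$ (using again $t \le \sqrt T \sqrt t$) finishes the argument. The main technical obstacle is precisely this Sobolev boundedness of the propagator: closing the top-order term of $[\nabla^2, L_s^2]$ needs either an integration-by-parts manoeuvre to trade an extra derivative against the Green function, or an appeal to standard parabolic $H^2_1$ regularity, and is the only step not immediate from the bounds of Proposition \ref{proptwosidedheat}.
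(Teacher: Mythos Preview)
Your argument is correct and is essentially the paper's proof: the same Duhamel identity (with the harmless swap of which propagator sits on the outside), the same use of \eqref{eqdiffeqheatkernelderiv} to get the $s^{-1/2}$ singularity and hence the $\sqrt t$ factor in (i), and the same reduction of (ii) to $H^2_1$-boundedness of the propagator. The paper simply invokes that last point as ``the standard fact that the propagators of uniformly elliptic diffusion equations are bounded operators in $H^2_1(\R^d)$'' rather than sketching the commutator/Gronwall route you outline, so your presentation is in fact more explicit about where the work lies.
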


  \begin{proof}
(i)  We apply the usual method for comparing semigroups or propagators. Denoting by $U_i^{t,s}$ the propagator solving equation \eqref{eqdiffeq} for $L^i$, that is
  \[
  U_i^{t,s} f(x)=\int G_i(t,x, s, \xi) f(\xi) d\xi, \quad i=1,2,
  \]
  with $G_i$ the heat kernel of the diffusion generated by $L_i$,
we see that they satisfy the identity
\begin{equation}
\label{eq2propsensitivfordif1}
   U_1^{t,0}-U_2^{t,0}=\int_0^t \frac{d}{ds} U_2^{t,s}U_1^{s,0} \, ds
   =\int_0^t U_2^{t,s}(L_s^1-L_s^2)U_1^{s,0}.
\end{equation}
From \eqref{eqdiffeqheatkernelderiv} it follows that, for any $u_0\in L_1(\R^d)$,
\[
\|\nabla U_1^{s,0} u_0 \|_L \le C s^{-1/2} \| u_0\|_L
\]
with a constant $C$. Hence, from \eqref{eq2propsensitivfordif1} and using the assumption
 that the difference $L_s^1-L_s^2$ is a first order operator we conclude
that
\[
\|( U_1^{t,0}-U_2^{t,0})u_0\|_L \le C \| u_0\|_L \int_0^t s^{-1/2} ds=2Ct^{1/2}\| u_0\|_L,
\]
as required.

(ii) It follows by the same identity \eqref{eq2propsensitivfordif1} combined with the standard fact that the propagators (resolving operators) of uniformly  elliptic diffusion equations
are bounded operators in $H^2_1(\R^d)$ (here the assumption of the existence of the second derivatives
 of the coefficients is needed), and with the observation that
\[
|(L_s^1-L_s^2)f(x)|_L \le \sup_{t,x}(|a_1(t,x)-a_2(t,x)|+|b_1(t,x)-b_2(t,x)|+|c_1(t,x)-c_2(t,x)|)\|f\|_{H^2_1(\R^d)}.
\]
\end{proof}

\section{Proof of the Theorems}

{\it Proof of Theorems \ref{thmediannonlin1} and \ref{thmediannonlin2}.}

Let $d=1$. Let us pick up a $u_0$ and then choose $K,\de, \ep>0$ from Proposition \ref{proplocalizemed}.
  Let $u_1,u_2 \in  U_{m,M,[0,T]}(u_0)$. Then the function $\om (h)=Q_{\al}(u_1+h(u_2-u_1))$ satisfies the equation
  \[
  \int_{-\infty}^{\om (h)} [u_1(x)+h(u_2(x)-u_1(x))]\, dx =\al.
  \]
  Differentiating with respect to $h$ yields
  \[
  \om'(h)=- [u_1(\om (h))+h(u_2(\om (h))-u_1(\om (h)))]^{-1} \int_{-\infty}^{\om(h)} (u_2(x)-u_1(x))\, dx.
  \]
  By Proposition \ref{proplocalizemed} we deduce the following basic estimate
   \begin{equation}
\label{eqestimdermedonedim}
  |Q_{\al}(u_2)-Q_{\al}(u_1)|=|\om(1)-\om(0)|=|\int_0^1 \om' (h) \, dh| \le \frac{1}{\de} \|u_1-u_2\|_L.
  \end{equation}

  Similarly, for arbitrary $d$, we get for  $\om_j (h)=Q_{\al}^j(u_1+h(u_2-u_1))$ the equation
     \begin{equation}
\label{eqestimdermed0}
  \om'_j(h)\int_{\R^{d-1}} (u_1+h(u_2-u_1))(x)|_{x_j=\om_j(h)} \prod_{k\neq j} dx_k
  =\int_{\{x \in \R^d: \, x_j\le \om_j(h)\} } (u_2(x)-u_1(x))\, dx,
  \end{equation}
  implying, by \eqref{eq1proplocalizemed} and \eqref{eq2proplocalizemed}, the estimate
\begin{equation}
\label{eqestimdermed}
|Q^j_{\al}(u_2)-Q^j_{\al}(u_1)| \le \frac{1}{\de K^{d-1}} \|u_1-u_2\|_L
\end{equation}
for each $j$.

Next, let $C([0,T], \R^d)$ denote the Banach space of $R^d$-valued continuous functions on $[0,T]$ with the usual norm
$\|\om_.\|=\sup_t |\om_t|$, and let $\CC (K)$ be the cube of side $2K$ centered at the origin.
For any $\om_0 \in \CC (K)$ let $C_{\om_0}([0,T], \CC(K))$ denote the convex subset of $C([0,T], \R^d)$ consisting of curves with $\om_0$ given and with values in $\CC (K)$. Let
\[
\om_0=Q_{\al}(u_0).
\]
 For a given curve $\om_. \in C_{\om_0}([0,T], \CC (K))$ let $u_t[\om_.](x)$ denote the solution at time $t$ of equation \eqref{eqSDEmedprelimdual} with the initial data $u_0(x)$.
 Let us define
 \[
 \hat \om_t=Q_{\al} (u_t[\om_.]).
 \]
 By the strong continuity of the propagators solving equation  \eqref{eqSDEmedprelimdual} in $L_1$ and by
 \eqref{eqestimdermed} we conclude that $\hat \om_t$ depends continuously on $t$. Consequently, applying Proposition
 \ref{proplocalizemed} we deduce that the mapping $\om_. \mapsto \hat \om_. $ is a mapping from $C_{\om_0}([0,T], \CC(K))$
 to itself. It is clear that the functions $u_t$ solve equation \eqref{eqSDEmeddensity}
 if and only if $\om_t= Q_{\al} (u_t)$ is a fixed point of this mapping. Thus well posedness of equation \eqref{eqSDEmeddensity}
 is reduced to the problem of uniqueness and existence of this fixed point.

 Let $\om_.^1, \om_.^2$ be two curves in $C_{\om_0}([0,T], \CC(K))$ and $\hat \om_.^1, \hat \om_.^2$ their respective images.
 By \eqref{eqestimdermed},
 \[
 |\hat \om_t^1-\hat \om_t^2|\le \frac{1}{\de K^{d-1}} \|u_t[\om_.^1]-u_t[\om_.^2]\|_L.
 \]

Moreover, from Proposition \ref{propsensitivfordif1}, we get under the assumptions of Theorems \ref{thmediannonlin1}
or \ref{thmediannonlin2} the estimates

 \[
 \|u_t[\om_.^1]-u_t[\om_.^2]\|_L \le \sqrt t C \ka \sup_{t\in [0,T]}|\om_t^1-\om_t^2| \|u_0\|_L
 \]
 or
   \[
 \|u_t[\om_.^1]-u_t[\om_.^2]\|_L \le \sqrt t C \ka \sup_{t\in [0,T]}|\om_t^1-\om_t^2| \|u_0\|_{H^2_1(\R^d)}
 \]
 respectively.
 In both cases this implies
 \begin{equation}
\label{eqcontractestimmed}
 |\hat \om_t^1-\hat \om_t^2|\le \sqrt t C \sup_{s \in (0,t]}|\om_s^1-\om_s^2|
 \end{equation}
 with another constant $C$ (depending on $u_0$). Consequently,  the mapping $\om_. \mapsto \hat \om_. $ is a contraction for small enough $T$. This implies the required well-posedness for small $T$. Global result follows by the usual iteration procedure
 completing the proof of Theorems \ref{thmediannonlin1} and \ref{thmediannonlin2}.

{\it Proof of Theorem \ref{thmediannonlin3}.}
Since the heat kernel (solution with the Dirac initial data) $G(t,x,0,\xi)$ is a positive continuous function for all $t>0$,
and by Theorems  \ref{thmediannonlin1} and \ref{thmediannonlin2}, it is enough to show the required well-posedness only
for small times. Next, by \eqref{eqestimdermed0}, \eqref{eqestimdermed} and the estimates
\eqref{eq4proplocalizemed}, \eqref{eq5proplocalizemed},  we find
\begin{equation}
\label{eqestimdermedGreen}
|Q^j_{\al}(u^2_t)-Q^j_{\al}(u^1_t)| \le \frac{1}{\de K^{d-1}} t^{d/2} t^{-(d-1)/2} \|u^1_t-u^2_t\|_L
\le \frac{1}{\de K^{d-1}} \sqrt t \|u^1_t-u^2_t\|_L
\end{equation}
for each coordinate $j$ and
\[
u^i_t(x)=G_i(t,x,0,\xi), \quad i=1,2,
\]
the heat kernels of two equations \eqref{eqSDEmedprelimdual} from the class specified by Proposition
\ref{proptwosidedheat} (i).

Next, under the assumptions of Theorem \ref{thmediannonlin1}, let us follow the same strategy by looking at the fixed points
of the mapping $\om_. \mapsto \hat \om_.$ from  $C_{\xi}([0,T], \CC_{\xi}(K \sqrt T))$ to itself,
where $\CC_{\xi}(K \sqrt T)$ is the cube of radius
$2K \sqrt T$ centered at $\xi$. As in Proposition \ref{propsensitivfordif1} (and noting that the norm of $\de_{\xi}$ is one)
we get for heat kernels arising from $\om^1, \om^2$ the estimate
  \[
 \|u_t[\om_.^1]-u_t[\om_.^2]\|_L \le \sqrt t C \ka \sup_{s\in [0,t]}|\om_s^1-\om_s^2|.
 \]
 Combining this with the previous estimate leads again to \eqref{eqcontractestimmed} implying the contraction
  property of the mapping $\om \mapsto \hat \om$ and hence the required well posedness.

 Let us turn to the assumptions of Theorem \ref{thmediannonlin2}.
 It will be more convenient now to work not with the quantiles $\om_t$, but with their transformations
 \[
 \eta_t=t^{-1/2}(\om_t-\xi).
 \]
 Let us look for a fixed point of the mapping
 \[
 \eta_. \mapsto \om_. \mapsto u_. \mapsto \hat \om_. \mapsto \hat \eta_.
 \]
 in the set $C((0,T], \CC(K))$ of continuous functions $(0,T]\mapsto  \CC(K)$. To see that $\eta \mapsto \hat \eta$ is in fact a mapping in this space we note that $\hat \eta_t \in \CC(K)$ for all $t$ (by Proposition \ref{proplocalizemed} (ii)) and that $\hat \eta_t$ is continuous for $t>0$ (by the proof of Theorem \ref{thmediannonlin3}). It remains to obtain the contraction property.

 As in the proof of Proposition \ref{propsensitivfordif1} (ii) (and using the fact that the second derivative of $G (t,x,0,\xi)$
 with respect to $x$ has singularity of order $1/t$) we obtain
 \[
 \|u_t[\om_.^1]-u_t[\om_.^2]\|_L \le C \ka \int_0^t \frac{1}{s} |\om_s^1-\om_s^2| \, ds,
 \]
 and hence
  \[
 \|u_t[\om_.^1(\eta_.^1)]-u_t[\om_.^2(\eta_.^2]\|_L \le C \ka \sqrt t \sup_{s\in (0,t]}|\eta_s^1-\eta_s^2|.
 \]
Combining with \eqref{eqestimdermedGreen} we get
 \begin{equation}
\label{eqcontractestimmedGreen}
 |\hat \eta_t^1-\hat \eta_t^2|\le \sqrt t C \sup_{s \in (0,t]} |\eta_t^1-\eta_t^2|,
 \end{equation}
 which again implies the required contraction property.

{\it Proof of Theorem \ref{thmediannonlinstable}.}

It goes along the same line. We just need the smoothing properties of equation
\eqref{eqstablemedprelimdensity}. They are developed in \cite{Ko00}, \cite{Ko00book} and \cite{Ko07}, see also Chapter 7 of
\cite{KoMarbook}. It is shown (see Theorems 3.1, 3.2 of \cite{Ko00} for time-homogeneous equations and
 Theorems 4.1, 4.2 of \cite{Ko07} for a
 straightforward extension to time non-homogeneous case) that
the transition probabilities (Green functions) $G_{\al}$ of processes generated by operators \eqref{eqstablemedprelimgen} satisfy the two sided estimate
\begin{equation}
\label{eqstableeqtwosidedbound}
C_1 S_{\al}(t-s, x-\xi; a, b)
\le G_{\al}(t,x, s, \xi) \le C_2 S_{\al}(t-s, x-\xi; a, b), \quad 0<s,t<T
\end{equation}
with constants $C_1, C_2$ and $S_{\al}$ denoting the stable density, i.e. the transition probability of the process
generated by \eqref{eqstablemedprelimgen} with constant $a,b$ ($C_1,C_2,a,b$ depend only on the bounds for
$a(x), a^{-1}(x), b(t,x, \om)$ and their derivatives in $x$), and the derivative of $G_{\al}$ with respect to $x$ (or $\xi$)
has the bound
\begin{equation}
\label{eqstablekernelderbound}
\frac{\pa G_{\al}}{\pa x}(t,x, s, \xi)
\le C(t-s)^{-1/\al} G_{\al}(t,x, s, \xi), \quad 0<s,t<T
\end{equation}
(see formula (58) of \cite{Ko07}).

With these estimates and the standard estimates for $S_{\al}$ (implying that $S_{\al}$ is of order $t^{-d/\al}$ for a ball of
radius $t^{1/\al}$ around the diagonal, see e.g. books \cite{Ko00} or \cite{KoMarbook})
the proof of Theorem \ref{thmediannonlinstable} and all intermediate results repeats literally the proof
of Theorem \ref{thmediannonlin1}.

{\bf Acknowledgements}. The author is grateful to Tom Kurtz for suggesting him this nice problem
and to Sigurd Assing and Astrid Hilbert for fruitful discussion.

\end{document}